\newtheorem{thm}{Theorem}
\newtheorem{lemma}[thm]{Lemma}
\newtheorem{corollary}[thm]{Corollary}
\def\A{{\mathcal A}}
\def\F{{\mathcal F}}
\def\R{{\mathcal R}}
\def\GAP{{\sf GAP}}
\def\Magma{{\sc Magma}}
\newenvironment{proof}{{\sc Proof}:}{\hfill $\Box$} 
\begin{document}

\title{On complexity of multiplication \\ in finite soluble groups}
\author{M.F.\ Newman and Alice C.\ Niemeyer}
\date{27 August 2014}

\maketitle  

\begin{abstract}
We determine a reasonable upper bound for the complexity of collection from 
the left to multiply two elements of a finite soluble group 
by restricting attention to certain polycyclic presentations of the group.
As a corollary we give an upper bound for the complexity of collection from the left 
in finite $p$-groups in terms of the group order.
\end{abstract}

\noindent
{\bf Keywords:\ }
Collection from the left, complexity, finite soluble groups.

\smallskip
\noindent
2000 MSC Primary: 68Q25; 20D10, 68W30

\smallskip
\centerline{\em Dedicated to the  memory of  \'Akos Seress}

\section{Introduction}
In studying groups using computers it is important to have
practical programs for multiplication of elements.
For finite soluble groups given by (finite) polycyclic presentations 
this involves having practical programs for
collection relative to a polycyclic presentation. 
See \S2 for a description.
Since the  work of Vaughan-Lee \cite{VL} and 
that of Leedham-Green and Soicher \cite{LGS} 
it has been known that collection from the left works well in practice.
Collection from the left is the basis for multiplication in 
the computer algebra systems 
{\GAP} \cite{GAP} and {\Magma} \cite{MAGMA}.

The cost of various collection strategies has been discussed in several papers, 
see for example \cite{LGS}, \cite{Gebhardt} and \cite{H2}. 

Leedham-Green and Soicher compared the performance of some
collection strategies and 
did some complexity analysis on collection from the left for finite $p$-groups.
We address a question they raised (\cite{LGS}, p.675)
of finding for finite $p$-groups and, more generally finite soluble groups, 
polycyclic presentations which from a complexity point of view
are favourable for collection from the left. 

Gebhardt \cite{Gebhardt} extended the investigation of collection from the
left to arbitrary polycyclic presentations. In particular, he
substantially improved performance by modifying collection from
the left to deal more effectively with large powers.
His programs are the basis for the multiplication available in {\Magma}.

H\"ofling \cite{H2}  considered various favourable presentations.  

In this paper we introduce a new kind of favourable presentation.
Using these allows us to give an accessible complexity analysis 
for collection from the left. 
The theorem is stated in \S2 and proved in \S3.
Our favourable presentations come from polycyclic series which refine 
series of normal subgroups with abelian sections such as the derived series.
In practice, a better way of handling powers is by using repeated squaring 
as described by Gebhardt (\cite{Gebhardt}, Section 4).

Cannon et al. \cite{CEL} consider other special
presentations in relation to  questions about finite soluble groups.

\section{Preliminaries and favourable polycyclic presentations}

We begin by recalling some terminology and notation.

A \emph{finite polycyclic presentation} is a presentation
\mbox{$\{\A \mid \R \}$} where 
\linebreak
$\A = \{ a_1,\ldots, a_m  \}$ 
and $\R$ consists of  relations in $\A$ of the form

\begin{tabular}{ll}
$a_i^{e_i}= v_{ii}$ &for $1 \le i \le m$\\
$a_ja_i= a_iv_{ij}$ & for $ 1\le i<j\le m$,\\
\end{tabular}  \\
where $e_i$ is a positive integer for $1 \le i \le m$  and
$v_{ij}$ is a word in $\{a_{i+1}, \ldots, a_m\}$ for $1 \le i \le j \le m.$

In this context it suffices to work only with non-negative words in $\A,$ 
that is, words involving only letters from $\A$ but not their inverses. 
The order of the generators matters; we take $a_1 < \dots < a_m.$ 
As usual we use the abbreviation $a^\alpha$ for the concatenation of 
$\alpha$ copies of $a$. The words $a_1^{\alpha_1} \cdots a_m^{\alpha_m} $ 
for integers $\alpha_i$ with $0 \le \alpha_i < e_i$ for $1 \le i \le m$
are the {\em normal}  words in $\A.$
We take the right-hand sides of the relations in $\R$ to be normal words.
The left-hand  sides of the relations in $\R$  are precisely  the 
\emph{minimal non-normal} words  in $\A$.   

Every finite polycyclic presentation $\{\A\mid \R\}$ defines a
finite soluble group with order dividing $e_1\cdots e_m.$ 
It is well-known that every finite soluble group has a finite polycyclic
presentation.  A polycyclic presentation for a finite soluble group $G$ is
\emph{consistent} if $|G| = e_1\cdots e_m$.  In this case,
every element of $G$ can be written uniquely as a normal word.

Given a non-normal word $w$  in $\A$ a \emph{collection step} replaces
a minimal  non-normal subword of $w$  with the right-hand  side of the
corresponding  relation in  $\R$.  Collecting  a word  in $\A$  is the
application of a sequence of collection steps starting at the word.  Collecting
a word  in $\A$ {\em  from the left}  is the collection in  which each
step  replaces the left-most  minimal non-normal  subword of  the word
being collected in that step. A more detailed discussion of collection
and, more generally, rewriting can be found in Sims  \cite{Sims}.

We measure complexity in a more conventional way than
Leedham-Green and Soicher \cite{LGS}.
We estimate the number of collection steps required to collect the
concatenation of two normal words to a normal word with respect to
particular `favourable' polycyclic presentations, and 
estimate the length of words which occur during such a collection. 

A finite polycyclic presentation ${\F}= \{\A\mid \R\}$ 
will be called \emph{favourable} 
if there is a positive integer $d$ and a non-decreasing, surjective function 
$\delta : \A \rightarrow \{1, \ldots, d\}$ such that 
either $v_{ii}$ is $a_{i+1} $ or
$v_{ii}$ is a normal word in $\{a_k,...,a_m\}$ with $\delta(a_k) > \delta(a_i)$ 
and 
for $j > i$ each $v_{ij}$ is a normal word  and 
equal either to $v^*_{ij}$ or to $a_j v^*_{ij}$ where $v^*_{ij}$ is a normal word 
in $\{a_k,...,a_m\}$ with $\delta(a_k) > \delta(a_i)$. 
The integer $d$ is the \emph{soluble bound} of $\F$.

\begin{lemma}\label{lem:fav}
Every finite soluble group has a favourable polycyclic presentation.
\end{lemma}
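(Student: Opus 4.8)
The plan is to build the favourable presentation directly from a normal series of $G$ with abelian factors, refined to a polycyclic series whose generators are dictated by the prime-power structure of each factor. I would start from the derived series $G = G_0 \rhd G_1 \rhd \cdots \rhd G_d = 1$, in which each $G_t \trianglelefteq G$ and each factor $G_{t-1}/G_t$ is abelian and nontrivial. The function $\delta$ will record, for each generator, the index $t$ of the factor $G_{t-1}/G_t$ that it helps to refine, so that $d$ is exactly the soluble bound; listing the factors in increasing order of $t$ makes $\delta$ non-decreasing, and their nontriviality makes it surjective.

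To refine an abelian factor, I would decompose the finite abelian group $G_{t-1}/G_t$ as a direct product of cyclic groups of prime-power order. For each cyclic factor of order $p^k$, choose a lift $g \in G_{t-1}$ of a generator and set $b_s = g^{p^{s-1}}$ for $s = 1, \ldots, k$; these satisfy $b_s^p = b_{s+1}$ for $s < k$ and $b_k^p = g^{p^k} \in G_t$. Listing the generators of these chains consecutively (all at level $t$) and concatenating the levels gives a polycyclic generating sequence $a_1 < \cdots < a_m$ together with $\delta$. The intermediate subgroups $H_i = \langle a_{i+1}, \ldots, a_m\rangle$ form a polycyclic series refining the original one: normality of each term follows since $G_t \trianglelefteq G_{t-1}$ and every subgroup of the abelian factor $G_{t-1}/G_t$ is normal, so the presentation is consistent.

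It then remains to verify the favourable conditions. The power relation $a_i^{e_i} = v_{ii}$ has $e_i = p$ and $v_{ii} = a_{i+1}$ whenever $a_i$ is not the last generator of its prime-power chain, and otherwise $a_i^{e_i} = b_k^p \in G_t$, a normal word in generators of level $> \delta(a_i)$; both are permitted. For the conjugation relation $a_j a_i = a_i v_{ij}$ with $j > i$ I would split on the level of $a_j$. If $\delta(a_j) > \delta(a_i) = t$, then $a_j \in G_t \trianglelefteq G$, so $v_{ij} = a_i^{-1} a_j a_i \in G_t$ is a normal word in generators of level $> t$, giving the form $v_{ij} = v^*_{ij}$. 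If $\delta(a_j) = \delta(a_i) = t$, then $a_i$ and $a_j$ commute modulo $G_t$, so $a_j a_i = a_i a_j [a_j,a_i]$ with $[a_j,a_i] \in G_t$ at level $> t$, giving the form $v_{ij} = a_j v^*_{ij}$. In each case one checks the right-hand side is a normal word, using that $H_{j_t} = G_t$ so elements of $G_t$ are exactly the normal words in $a_{j_t+1}, \ldots, a_m$.

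The step I expect to be the main obstacle is arranging the power relations to fit the rigid template in which $v_{ii}$ is either $a_{i+1}$ or a word of strictly higher level: this is precisely what forces the decomposition of each abelian factor into prime-power cyclic pieces and the chaining $b_s^p = b_{s+1}$, with the final power of each chain absorbed into $G_t$ rather than into a same-level generator. Once that decomposition is fixed, the conjugation relations follow routinely from the normality of the $G_t$ and the abelianness of the factors, and checking normality of the intermediate subgroups and consistency of the presentation is standard and needs no new idea.
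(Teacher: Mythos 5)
Your proof is correct and follows the same route as the paper: refine the derived series, take generators factor by factor, and let $\delta$ record which derived factor each generator comes from. The only difference is that you split each abelian factor into prime-order steps via the chains $b_s^p = b_{s+1}$ (so every $e_i$ is prime and the $v_{ii}=a_{i+1}$ clause of the definition is used), whereas the paper simply takes a minimal generating set of each factor, with composite $e_i$ allowed, and reserves the prime-order refinement for the $p$-group case; both choices satisfy the definition, and your write-up in fact supplies the verifications that the paper compresses into ``built on $\A$, in the usual way''.
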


\begin{proof}
Let $G$ be a finite soluble group. 
Let $d$ be the derived length of $G$ and let 
$G = G^{(0)} > G^{(1)} > \cdots >  G^{(d-1)} >  G^{(d)} = \langle 1 \rangle$ 
be the derived series of $G$. 
For $1 \le \ell \le d$ 
choose a subset $\A_{\ell}$ in $G$ 
so that $\A_{\ell} G^{(\ell)}$ is a minimal generating set for $G^{(\ell-1)}/G^{(\ell)}$
and put $\A = \A_1 \cup \dots \cup A_d$.
A function $\delta$ is defined on $\A$ by $\delta(a) = \ell$ for $a \in \A_{\ell}$.  
The polycyclic presentation built on $\A$, in the usual way, is favourable.  
\end{proof}

\medskip

Write  $\A = \{a_1,\dots,a_m\}$ and, for $a_i$ in $G^{\ell}$,
write $e_i$ for the order of $a_i$ modulo $G^{\ell+1}.$
A favourable presentation obtained from a group as in the proof is consistent
and has all $e_i >1$. 
We assume henceforth that the finite polycyclic presentations that occur 
have these properties.
Note that a finite cyclic group has a favourable presentation on  one
generator  and one relation. A finite metacyclic group  may not have a
favourable presentation on two generators but then it has a 
favourable presentation on three generators.
The normal words have length at most $e_1-1+...+e_m - 1$ which we
denote $N$.

For a finite $p$-group  we can  take the $e_i$ to be equal to $p$  
by replacing the minimality condition in the definition of $\A$
with choosing $\A$ via a polycyclic series
which is a composition series refining the derived series.

\medskip

\noindent
{\bf Theorem}
\emph{
Let ${\F}= \{\A\mid \R\}$ be a favourable polycyclic presentation with
soluble bound $d$ and maximum normal word length $N$. 
Every concatenation of two normal words in ${\A}$ can be collected
from the left to a normal word in at most $N^{3d-1}$ steps.  
All words occurring in the course of this collection have
length at most $2N$ when $d=1$ and at most $2(d-1) N^2$ otherwise.
}

\medskip
It remains an open question whether every finite soluble group has
a finite polycyclic presentation 
and a collection using that presentation for which the number of
collection steps
is polynomial in the size of the input; which we are measuring by $N$.

\section{Proof of the theorem} 

There is nothing to prove if either word is trivial. 
Let $u$ and $w$ be non-trivial normal words in $\A.$ 
Let $a_s$ be the first letter of $w$ 
and write $w = a_s w_2$.
Write $u = u_1 a_s^{\alpha_s} u_2$ 
 where $u_1$ is a word in $\{a_1,...,a_{s-1}\}$ and
$u_2$ is a word in $\{a_{s+1},...,a_m\}$. 

In collecting $uw$ from the left the subword $u_1$ is never modified and 
the subword $w_2$ is not involved until 
collection of the subword $a_s^{\alpha_s} u_2 a_s$ 
to a normal word has been completed. The theorem is a consequence of
showing that the collection of  subword $a_s^{\alpha_s} u_2 a_s$ to a normal
word takes at most $N^{3d-2}$ steps and that, in context, the length
of words occurring during the collection is at most 
$2N$ when $d=1$ and at most $2(d-1) N^2$ otherwise.

For $d=1$ collecting the subword $a_s^{\alpha_s} u_2 a_s$ to a normal word 
takes at most $N$ steps and the length of words occurring during the collection 
is at most $2N$.
Since the length of $w$ is at most $N$, the number of steps 
in the collection of $uw$ to a normal word is at most $N^2$. 
The length of words occurring during the collection remains at most $2N$.

For $ d > 1$ we define the \emph{derived} presentation 
${\F}' = \{\A'\mid\R'\}$ of $\F$ as follows.
Put $\A' = \{a \in \A\mid \delta(a) > 1 \}$. 
Let $\R'$ denote the subset of all relations in
$\R$ whose left-hand sides involve only generators in $\A'$. 
Define the function $\delta' : \A' \rightarrow 
\{1, \ldots, d-1\}$ by $\delta'(a) = \delta(a)-1.$ 
Clearly, $\delta'$ is non-decreasing and surjective. 
Then ${\F}'$ is a favourable presentation with soluble bound $d-1$.
Let $N'$ denote the maximum length of a normal word in ${\F}'$. 
By induction the least upper bound $\sigma'$ on the number of steps
required to collect the concatenation of two normal words in 
${\A}'$ to a normal word is at most $N'^{(3d-4)}$ 
and the least upper bound $\lambda'$ on 
the length of the words occurring during the collection
is at most $2N'$ when $d=2$ and at most $2(d-2) N'^2$ otherwise.
Note $N' \le N-1$.

It will be convenient to write $\Pi_1$ for a normal word in $\A'$ and 
$\Pi_r$ for the concatenation of $r$ normal words in $\A'$; 
empty words are allowed. 
The following lemma is obvious.

\begin{lemma}\label{alpha}
For $ r \ge 2$, collection from the left of \ $\Pi_r$ to a  normal word takes
at most $(r-1) \sigma'$ steps and the length of words occurring 
is at most $\lambda' + (r-2)N'.$
\end{lemma}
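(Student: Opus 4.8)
The plan is to argue by induction on $r$, peeling off the leftmost two factors at each stage. For the base case $r=2$ the word $\Pi_2$ is, by definition, a concatenation of two normal words in $\A'$, so the asserted bounds $(r-1)\sigma' = \sigma'$ on the number of steps and $\lambda' + (r-2)N' = \lambda'$ on the length are exactly the defining properties of $\sigma'$ and $\lambda'$; nothing further is required (empty factors only make the collection shorter).

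For the inductive step, with $r \ge 3$, I would write $\Pi_r = w_1 w_2 \cdots w_r$ with each $w_i$ a normal word in $\A'$, and set $s = w_3 \cdots w_r$, so that $\Pi_r = (w_1 w_2)\,s$. The key structural observation --- the same one underlying the reduction at the start of the proof of the theorem --- is that collection from the left never modifies the suffix $s$ until the prefix $w_1 w_2$ has been collected to a normal word. I would justify this from the form of the minimal non-normal words, namely $a_i^{e_i}$ and $a_j a_i$ with $j > i$: as long as the evolving prefix is itself non-normal, its leftmost minimal non-normal subword lies entirely within the prefix, and a collection step can reach a letter of $s$ only after the prefix has already become a normal word. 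Consequently the first phase of the collection reproduces, step for step, the collection of $w_1 w_2$ on its own: it takes at most $\sigma'$ steps, and the evolving prefix has length at most $\lambda'$, so the whole word (the prefix followed by the untouched $s$) has length at most $\lambda' + |s| \le \lambda' + (r-2)N'$, using $|w_i| \le N'$ for the $r-2$ factors of $s$.

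Once the prefix has been collected to a normal word $v$, the remaining word is $v\,s = v\,w_3 \cdots w_r$, which is a concatenation of $r-1$ normal words in $\A'$, that is, a $\Pi_{r-1}$. Applying the induction hypothesis to it, the second phase takes at most $(r-2)\sigma'$ further steps, with words of length at most $\lambda' + (r-3)N'$. Adding the two phases gives at most $\sigma' + (r-2)\sigma' = (r-1)\sigma'$ steps overall, while the maximum word length across both phases is $\lambda' + (r-2)N'$, the phase-one bound dominating the phase-two bound. This closes the induction.

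The only point needing care --- and the part I expect to be the main obstacle to making the ``obvious'' claim rigorous --- is the locality statement that $s$ stays untouched throughout phase one and that the evolving prefix coincides with the standalone collection of $w_1 w_2$. Everything else is routine bookkeeping in which the step counts simply add and the length estimates telescope, drawing only on the definitions of $\sigma'$, $\lambda'$, and the bound $|w_i| \le N'$.
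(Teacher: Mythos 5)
Your proof is correct, and it matches the paper's intent exactly: the paper states Lemma~\ref{alpha} without proof (``The following lemma is obvious''), and your induction --- peeling off the leading two factors, invoking the locality of collection from the left (justified, as you do, from the fact that the minimal non-normal words $a_i^{e_i}$ and $a_ja_i$ cannot straddle the boundary while the prefix is still non-normal, since a run $a_i^k$ with $k<e_i$ contains no minimal non-normal subword) to reduce $\Pi_r$ to $v\,w_3\cdots w_r = \Pi_{r-1}$ --- is precisely the routine argument the authors had in mind. The bookkeeping $\sigma' + (r-2)\sigma' = (r-1)\sigma'$ and $\lambda' + (r-2)N'$ (phase one dominating phase two) is exactly right, so nothing is missing.
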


\begin{lemma}\label{eta}
For $\delta(a) =1 $ collection from the left of $\Pi_1 a \Pi_1$ 
to a normal word takes at most $N'  + N'\sigma'$ steps and 
the length of words occurring is at most $1 + \lambda' + (N'-1)N'  $.
\end{lemma}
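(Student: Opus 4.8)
The plan is to collect $\Pi_1\,a\,\Pi_1$ in two phases: first drive the single letter $a$ leftwards to the front of the word, and then collect the remaining concatenation of normal words in $\A'$ by invoking Lemma~\ref{alpha}. Write the word as $u\,a\,v$, where $u=b_1\cdots b_k$ and $v$ are normal words in $\A'$. Since $\delta(a)=1$ while every letter of $\A'$ has $\delta$-value at least $2$, and $\delta$ is non-decreasing, $a$ precedes every letter occurring in $u$ and $v$.

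First I would check that collection from the left acts on $u\,a\,v$ by repeatedly resolving the descent immediately to the left of $a$: every proper prefix of $u$ is itself normal, so the left-most minimal non-normal subword is the descent $b\,a$, where $b$ is the last as-yet-unpassed letter of $u$. Applying the relation $b\,a=a\,v_{ij}$ moves $a$ one place to the left and inserts $v_{ij}$ to its right. The essential point, which I expect to be the crux, is that because $\delta(a)=1$ is minimal the favourable hypothesis forces the word $v^*_{ij}$, and hence the (normal) word $v_{ij}$ itself in either of its two allowed shapes, to lie entirely in $\A'$; in particular it contains no further copy of $a$ and begins with a letter larger than $a$, so $a$ creates no new descent on its right and never triggers a power relation or moves back. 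Thus after exactly $|u|=k\le N'$ steps the word has become $a\,w_1\cdots w_k\,v$, where each $w_s$ is the normal word in $\A'$ produced as $a$ passed $b_s$. (If $u$ is empty the word $a\,v$ is already normal and there is nothing to do.)

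At this point $a$ sits at the front and, being smaller than and disjoint from every remaining letter, is inert for the rest of the collection. What lies to its right is $\Pi_{k+1}$, the concatenation of the $k\le N'$ normal words $w_1,\dots,w_k$ together with $v$. Collecting this by Lemma~\ref{alpha} with $r=k+1$ costs at most $k\,\sigma'\le N'\sigma'$ steps; adding the at most $N'$ steps of the first phase gives the step bound $N'+N'\sigma'$.

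For the length bound I would observe that during the first phase the word length is non-decreasing, since each step deletes one letter of $u$ and inserts the non-empty word $v_{ij}$; hence its peak there is attained at the transition word $a\,\Pi_{k+1}$, of length $1+|\Pi_{k+1}|$. As $\Pi_{k+1}$ is exactly the starting word of the Lemma~\ref{alpha} collection, its length, and that of every word arising in the second phase, is at most $\lambda'+(k-1)N'$. The leading $a$ contributes $1$ throughout and never interacts, so every word occurring has length at most $1+\lambda'+(k-1)N'\le 1+\lambda'+(N'-1)N'$, as required. The main thing to get right is the bookkeeping of the first phase: confirming that collection from the left really empties $u$ onto the right of $a$ one letter at a time, so that the intermediate word is a genuine $a\,\Pi_{k+1}$ to which Lemma~\ref{alpha} directly applies.
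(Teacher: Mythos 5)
Your proposal is correct and follows the same two-phase strategy as the paper: at most $N'$ steps move $a$ to the front (the favourable condition with $\delta(a)=1$ guaranteeing each conjugate $v_{ij}$ is a normal word in $\A'$, so the result is $a\,\Pi_{k+1}$ with $k\le N'$), followed by an application of Lemma~\ref{alpha} costing at most $N'\sigma'$ further steps, with the same length bookkeeping. Your write-up simply makes explicit the details (why the leftmost minimal non-normal subword is always the descent at $a$, and why $a$ is thereafter inert) that the paper's proof leaves implicit.
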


\begin{proof}
The collection begins with at most $N'$ steps giving $a \Pi_{N'+1}.$ 
All the words during these steps have length at most  $1+(N'+1)N'.$
Using Lemma~\ref{alpha} the collection is completed in at most  
$N' \sigma'$ further steps during which the length of words is at most 
$1 + \lambda' + (N'-1)N' $.
\end{proof}

\begin{lemma}\label{beta}
Collection from the left of the subword
$a_s^{\alpha_s}u_2^{\vphantom{\alpha_s}}a_s^{\vphantom{\alpha_s}}$ of
$u_1a_s^{\alpha_s}u_2 a_sw_2$ to a normal word takes at most 
$N^{3d-2}$ steps and the length of words occurring is at most $2(d-1)N^2$.
\end{lemma}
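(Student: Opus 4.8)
Since the $d=1$ case has already been dealt with, I may assume $d \ge 2$, so that the derived presentation ${\F}'$ is available together with its inductive bounds $\sigma' \le N'^{3d-4}$ and $\lambda'$, and with $N' \le N-1$. The plan is to split on the value of $\delta(a_s)$. If $\delta(a_s) > 1$ then, since $\delta$ is non-decreasing and every letter of $a_s^{\alpha_s}u_2a_s$ has index at least $s$, all of these letters lie in $\A'$; as $a_s^{\alpha_s}u_2$ is a normal word, $a_s^{\alpha_s}u_2a_s$ is of the form $\Pi_2$, and Lemma~\ref{alpha} with $r = 2$ collects it in at most $\sigma' \le N^{3d-2}$ steps through words of length at most $\lambda' \le 2(d-1)N^2$. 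So the substance is the case $\delta(a_s) = 1$.

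Here I would first record the structural facts that favourability gives at level $1$. Because $\delta$ is non-decreasing, the letters after $a_s$ form an initial run of level-$1$ generators followed by a run of $\A'$ generators, so I write $u_2 = c\,\pi$ with $c$ a normal word in level-$1$ generators and $\pi$ a normal word in $\A'$. Moving a level-$1$ generator left past an $\A'$ generator produces only $\A'$ material; moving it past a level-$1$ generator $a_j$ replaces $a_ja_s$ by $a_sa_jv^*_{sj}$ with $v^*_{sj}$ a normal word in $\A'$; and the power relation $a_s^{e_s} = v_{ss}$ has $v_{ss}$ either equal to $a_{s+1}$ or a normal word in $\A'$. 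Consequently every collection step is either a \emph{level-$1$ step}, moving one level-$1$ generator one place left (or firing a level-$1$ power, possibly carrying into $a_{s+1}$) and depositing at most one normal $\A'$ word, or an \emph{$\A'$ step}, that is, a collection step of ${\F}'$ to which the inductive bounds apply.

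The collection then proceeds in the order forced by collecting from the left. The trailing $a_s$ travels left through $\pi$ (at most $N'$ steps) and then through $c$, merging with $a_s^{\alpha_s}$ and firing the power relation if $\alpha_s + 1 = e_s$; this leaves the level-$1$ generators of $c$ sitting to the right of the merged power, each separated from the next by a single normal $\A'$ word, with the $\pi$-corrections trailing at the far right. Collecting from the left now repeatedly (i) collapses the block of at most $N'+1$ pending normal $\A'$ words standing in front of the left-most misplaced level-$1$ generator into one normal $\A'$ word, by Lemma~\ref{alpha}, and then (ii) pulls that generator past the single block so produced, by (the argument of) Lemma~\ref{eta}; after at most $|c| \le N$ rounds all level-$1$ generators stand in normal order at the front, and a final application of Lemma~\ref{alpha} collects the residual $\A'$ concatenation.

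It remains to tally, and this is where I expect the real work to lie. The crucial point is that collecting from the left keeps the number of pending $\A'$ blocks in front of the active generator to at most $N'+1$, so each appeal to Lemma~\ref{alpha} has parameter at most $N'+1$ (at most $2N'+1$ for the final one) and each appeal to Lemma~\ref{eta} moves a generator past a single block of length at most $N'$. The dominant contribution to the step count is the at most $|c| \le N$ rounds, each costing at most $N'\sigma'$ steps, giving a bound of order $N^2\sigma'$; since $\sigma' \le N'^{3d-4}$ and $N' \le N-1$, this lies below $N^{3d-2}$, the gap left by $N' \le N-1$ absorbing the at most $2N$ level-$1$ steps of the initial merge and the boundary applications of Lemma~\ref{alpha}. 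For the length, each $\A'$ block collapsed by Lemma~\ref{alpha} reaches length at most $\lambda' + (N'-1)N'$; adding the trailing $\pi$-corrections, of length at most $N'^2$, and invoking the inductive bound on $\lambda'$, the total $\A'$ material stays of order $(d-1)N'^2$, which $N' \le N-1$ brings below $2(d-1)N^2$ even after the level-$1$ prefix and the at most $2N$ ambient letters of $u_1$ and $w_2$ are included. The main obstacle is exactly this bookkeeping: one must verify that collecting from the left realises the stated schedule, so that the number of pending $\A'$ blocks never exceeds $N'+1$ and the accumulated length stays within the claimed bound.
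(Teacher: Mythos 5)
Your argument is, in outline, exactly the paper's: the same decomposition of $u_2$ into a level-one prefix and an $\A'$ tail (the paper writes $u_2 = a_{t_1}\cdots a_{t_h}\Pi_1$ with $h \le N-N'-1$), the same initial pass of the trailing $a_s$ in at most $N-1$ steps leaving $a_s^{\alpha_s+1}a_{t_1}\Pi_1\cdots a_{t_h}\Pi_1\Pi_{N'}$, and the same alternation of single-block pulls with Lemma~\ref{alpha} collapses, which is precisely what Lemma~\ref{eta} packages (the paper applies it to $\Pi_1 a_{t_{k+1}}\Pi_1$, the first $\Pi_1$ being the block collapsed in the previous round). The scheduling worry you flag at the end is resolvable: after a pull, every non-normal junction in the deposited $\A'$ material lies strictly to the left of the next level-one generator, and the relations among $\A'$ generators produce only $\A'$ letters, so left-most collection necessarily finishes collapsing the at most $N'+1$ pending blocks before touching that generator. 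Your tally, however, is loose in a way you should not leave implicit: the level-one steps are not ``at most $2N$'' but at most $N-1$ for the initial pass plus up to $N'$ per round, roughly $NN'$ in all; the paper's explicit count is $N-1+(N-N'-2)(N'+N'\sigma')+N'\sigma'$, and one must actually verify this is at most $N^{3d-2}$ rather than appeal to the order of magnitude.

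The genuine omission is the case $\alpha_s = e_s-1$, which occupies half of the paper's proof and which you dispose of in a parenthesis. If the power relation fires with $v_{ss}$ a normal word in $\A'$, an extra block lands to the left of $a_{t_1}$, so one more round of Lemma~\ref{eta} is needed ($N-N'-1$ uses rather than $N-N'-2$) and the length estimate must accommodate the extra block. If $v_{ss}=a_{s+1}$, the carry can cascade: the new $a_{s+1}$ may complete $a_{s+1}^{e_{s+1}}$, whose relation may in turn yield $a_{s+2}$, and so on through up to $h$ further power relations interleaved between the rounds, until some $a_t^{e_t}$ has a right-hand side in $\A'$, after which the argument proceeds as before. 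These contribute only $h$ extra steps and one extra $\A'$ block, and the stated bounds survive, but your proposal neither identifies the cascade nor checks that it fits under $N^{3d-2}$ and $2(d-1)N^2$.
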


\begin{proof}
Write $u_2 = a_{t_1} \dots a_{t_h} \Pi_1$
where $\delta(a_{t_1}) = \dots = \delta(a_{t_h})=1$; then $h \le N-N'-1$.
It takes at most $N-1$ steps to move $a_s$ past $u_2$ giving 
$$u_1 a_s^{\alpha_s} a_s a_{t_1}\Pi_1\dots a_{t_h}\Pi_1 \Pi_{N'} w_2.$$
The length of this word is at most $2N + (N-1)N'$.
When $\alpha_s < e_s-1$ 
the rest of the collection of  $a_s^{\alpha_s} u_2 a_s$ to a normal word
consists of at most $N-N'-2$ stages using Lemma~\ref{eta}
followed by using Lemma~\ref{alpha} to collect $\Pi_{N'+1}$.
The collection takes at most 
$N-1+(N-N'-2)(N'+N'\sigma') + N'\sigma'$ steps. 
This is at most $N^{3d-2}$.
The length of the words occurring is at most
$2N  + (N-3)N' + 1 + \lambda' + (N'-1)N'$ when the lengths of $u_1,w_2$
are included; so is at most $\lambda' + 2N^2$.
When $\alpha_s = e_s-1$ the next step
replaces $a_s^{\alpha_s} a_s$ by $\Pi_1$ or $a_{s+1}.$ 
The length is  at most $2N+NN'$.
In the first case the rest of the collection might need $N-N'-1$ uses of
Lemma~\ref{eta} but still gives the stated result.
In the second case it may be necessary to use up to $h$ power relations
collecting $a_t^{e_t}$ between uses of Lemma~\ref{eta}. Eventually one of these has the form 
$a_t^{e_t} = \Pi_1.$ Then the rest of the collection using Lemma~\ref{eta} enough times followed by
using Lemma~\ref{alpha} gives the result.
The length of the words occurring is at most
$2N  + (N-2)N' + 1 + \lambda' + (N'-1)N'$ when the lengths of $u_1,w_2$ are included; 
so is at most $\lambda' + 2N^2$.
\end{proof}

\medskip

Since the length of $w$ is at most $N$, it follows from Lemma~\ref{beta} that
the number of steps in the collection of $uw$ to a normal word is at most 
$N^{3d-1}$ and the length of words occurring is at most $2N$ when $d=1$ and 
at most $2(d-1) N^2$ otherwise.
 \hfill $\Box$

\begin{corollary}
A finite $p$-group $G$ 
has a favourable presentation with respect to which
every concatenation of two normal words can be collected from the
left in at most
$((p-1)\log_p |G|)^{3 \log_2\log_p |G| +1}$ steps.
\end{corollary}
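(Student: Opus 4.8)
\noindent
The plan is to obtain the Corollary by substituting explicit values for the two parameters $N$ and $d$ of the Theorem into the estimate $N^{3d-1}$. First I would build the favourable presentation exactly as in Lemma~\ref{lem:fav}, but using the refinement noted immediately afterwards for $p$-groups: choose $\A$ through a composition series refining the derived series, so that every $e_i$ is $p$. The presentation is consistent, so the number of generators is $m=\log_p|G|$ and the maximum normal word length is $N=\sum_{i=1}^m(e_i-1)=(p-1)\log_p|G|$. This already fixes the base of the power in the statement.

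The remaining task is to control the soluble bound $d$. Since $\delta$ is read off the derived series, $d$ is precisely the derived length of $G$, so everything reduces to bounding the derived length of a finite $p$-group by a logarithmic function of $\log_p|G|$. The tool I would use is the classical containment $G^{(i)}\subseteq\gamma_{2^i}(G)$ of the $i$-th derived subgroup in the $2^i$-th term of the lower central series, which follows by induction on $i$ from $[\gamma_a(G),\gamma_b(G)]\subseteq\gamma_{a+b}(G)$. Because a $p$-group of order $p^n$ has nilpotency class at most $n-1$, nontriviality of $G^{(d-1)}\subseteq\gamma_{2^{d-1}}(G)$ forces $2^{d-1}\le n-1$, and hence $d\le 1+\log_2\log_p|G|$. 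Feeding $N=(p-1)\log_p|G|$ and this bound on $d$ into $N^{3d-1}$ and simplifying is then a routine calculation aimed at the stated expression $\bigl((p-1)\log_p|G|\bigr)^{3\log_2\log_p|G|+1}$.

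I expect the delicate point to be matching the additive constant in the exponent. The crude estimate above yields exponent $3d-1\le 3\log_2\log_p|G|+2$, one more than claimed, so to reach $3\log_2\log_p|G|+1$ I would need the slightly sharper bound $d\le\log_2\log_p|G|+\tfrac23$. This in turn rests on the fact that $p$-groups of small order have small derived length (for instance that derived length $3$ already forces the order to be somewhat larger than the bare class bound predicts), together with a separate, immediate treatment of the degenerate cases---most notably a cyclic $G$, where the generic estimate $N^{3d-1}$ overshoots but the claimed bound still holds by direct inspection. Pinning down exactly which derived-length bound delivers the constant $\tfrac23$, and verifying the boundary values of $\log_p|G|$, is where the real care lies; the rest is bookkeeping.
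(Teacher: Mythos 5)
Your route is exactly the paper's: build the presentation from a composition series refining the derived series so that every $e_i=p$, read off $N=(p-1)\log_p|G|$, identify the soluble bound $d$ with the derived length, bound the derived length via $G^{(i)}\le\gamma_{2^i}(G)$ together with the class bound $n-1$, and substitute into $N^{3d-1}$. So there is no methodological difference to report. What deserves comment is that the ``delicate point'' you isolate is genuine and is not resolved in the paper either: the paper's proof cites Hall's bound $d\le 1+\log_2(n-1)$ and then simply writes down the exponent $3\log_2\log_p|G|+1$, even though $3d-1\le 3\log_2(n-1)+2$ exceeds $3\log_2 n+1$ as soon as $n\ge 5$. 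In other words, what this argument actually delivers --- in your write-up and in the paper's --- is the corollary with exponent $3\log_2(\log_p|G|-1)+2$; the stated $+1$ does not follow from the class bound alone.

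Your proposed repair $d\le\log_2 n+\tfrac23$ is equivalent to $n\ge 2^{1/3}\cdot 2^{d-1}$. As you suspected, this needs care: it already fails for $n=1$ (cyclic of order $p$, where your direct-inspection caveat is indeed required, since $N^{3d-1}=(p-1)^2$ overshoots the claimed $p-1$), and for general $d$ it demands lower bounds on the order of a $p$-group of derived length $d$ that are strictly sharper than the $n\ge 2^{d-1}+1$ coming from the class argument; even the refined form $n\ge 2^{d-1}+d-1$ covers only $d\le 4$. So do not search for a hidden trick you are missing: you have correctly reproduced the paper's argument and, in addition, put your finger on a constant the paper does not justify. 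The safe, fully proved statement is the one with exponent $3\log_2\log_p|G|+2$.
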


\begin{proof}
Let the order of $G$ be $p^n$.
Let ${\F}= \{\A\mid \R\}$ be a favourable polycyclic presentation for
$G$ obtained by using a composition series refining the derived series.
The maximum normal word length for $\F$ is at most $(p-1)n.$ 
Moreover, the soluble bound $d$ for $\F$ can be chosen to be the derived length of $G$. 
By a well-known result of P. Hall 
$d$ is at most $1 + \log_2 (n-1)$ (e.g. \cite{Sims}, Corollary 9.1.11). 
So by the theorem the number of steps in collection from the left is at most
$((p-1)\log_p |G|)^{3 \log_2\log_p |G| +1}.$
\end{proof}

\subsection*{Acknowledgements}

We thank a referee for a careful and helpful report.
The second author acknow\-ledges the support of  
ARC Discovery grants  DP110101153 and DP140100416.

%\subsection*{References}

\begin{tabbing}
444444445456789012345123454555555\=45555555555555454444444444444444444\kill
Mathematical Sciences Institute \> School of Mathematics and Statistics\\
Australian National University \> University of Western Australia \\
Canberra ACT 0200 \> Crawley WA 6009\\
Australia \> Australia \\
newman@maths.anu.edu.au \> Alice.Niemeyer@uwa.edu.au
\end{tabbing}

\end{document}